\renewcommand{\section}{\@startsection%
{section}
{1}
{0mm}
{1.5\bigskipamount}
{0.5\bigskipamount}
{\centering\normalsize\sc}}
\renewcommand{\subsection}{\@startsection%
{subsection}
{2}
{0mm}
{0.5\bigskipamount}
{0.5mm}
{\normalsize\sc}}
\renewcommand{\paragraph}{\@startsection%
{paragraph}
{4}
{0mm}
{\bigskipamount}
{0pt}
{\normalsize\bf}}
\let\expandafter\oldproof\csname\string\proof\endcsname
\let\oldendproof\endproof
\renewenvironment{proof}[1][\proofname]{%
  \oldproof[\slshape #1]%
}{\oldendproof}
\def\provedboxcontents#1{$\square$}
\newtheoremstyle{thm}{6pt plus 1pt minus 1pt}{6pt plus 1pt minus 1pt}{\slshape}{}{\scshape}{.}{5pt plus 1pt minus 1pt}{}
\newtheoremstyle{def}{6pt plus 1pt minus 1pt}{6pt plus 1pt minus 1pt}{}{}{\scshape}{.}{5pt plus 1pt minus 1pt}{}
\newtheoremstyle{rmk}{6pt plus 1pt minus 1pt}{6pt plus 1pt minus 1pt}{}{}{\scshape}{.}{5pt plus 1pt minus 1pt}{}
\newtheoremstyle{claim}{6pt plus 1pt minus 1pt}{6pt plus 1pt minus 1pt}{}{}{\slshape}{.}{5pt plus 1pt minus 1pt}{}
\theoremstyle{thm}
\newtheorem{newstatement}{newstatement}
\newtheorem{lemma}[newstatement]{Lemma}
\newtheorem*{theorem*}{Theorem 2}
\newtheorem*{stiefel-thm}{Stiefel's Parallelizability Theorem}
\theoremstyle{def}
\theoremstyle{rmk}
\newtheorem{remark}[newstatement]{Remark}
\newtheorem*{example*}{Example}
\theoremstyle{claim}
\DeclareMathOperator{\SO}{SO}
\renewcommand{\epsilon}{\varepsilon}
\renewcommand{\phi}{\varphi}
\renewcommand{\:}{\,{:}\;}
\newcommand{\R}{\mathbb{R}}
\newcommand{\Z}{\mathbb{Z}}
\newcommand{\Bd}{\partial}
\DeclareMathOperator{\id}{id}
\DeclareMathOperator{\cs}{\#}
\let\emph\textsl
\title{On Stiefel's parallelizability of 3-manifolds}
\author{Valentina Bais}
\address{Dipartimento di Matematica e Geoscienze, Università di Trieste, Via Valerio 12/1, 34127 Trieste, Italy.}
\email{valentina.bais@studenti.units.it}
\author{Daniele Zuddas}
\address{Dipartimento di Matematica e Geoscienze, Università di Trieste, Via Valerio 12/1, 34127 Trieste, Italy.}
\email{dzuddas@units.it}
\date{}
\begin{document}


\begin{abstract}
We give a new elementary proof of the parallelizability of closed orientable 3-manifolds. We use as the main tool the fact that any such manifold admits a Heegaard splitting.
\end{abstract}

\keywords{Parallelization, trivial tangent bundle, frame field, 3-manifold.}

\subjclass[2020]{Primary 57R25; Secondary 57K35, 57R15, 57R22.}
\maketitle

The aim of this note is to present a new, elementary proof of the following classical theorem \cite{St1935}.
\begin{stiefel-thm}
Every smooth closed orientable 3-manifold is parallelizable.
\end{stiefel-thm}

We recall that a smooth $m$-manifold $M$ is said to be \textsl{parallelizable} if its tangent bundle $TM$ is trivial or, equivalently, if there are $m$ vector fields on $M$ which are everywhere linearly independent. Such an $m$-tuple of vector fields is said to be a \textsl{parallelization} or \textsl{frame field} on $M$. Notice that if $M$ is parallelizable then its Euler characteristic vanishes, so the only closed connected parallelizable 2-manifold is the torus. 

In literature there are other several elementary (as well as less elementary) proofs of Stiefel's Parallelizability Theorem, see for example Benedetti and Lisca \cite{BL2018}, Durst, Geiges, Gonzalo and Kegel \cite{DGGK2020}, Gonzalo \cite{Go1987}, Kirby \cite[Chapter VII]{K1989}, Geiges \cite[Section 4.2]{Ge2008}, Fomenko and Matveev \cite[Section 9.4]{FM1997} and Whitehead \cite{Wh1961}. However, as far as we know, there is no trace in literature of the proof given in the present paper. We believe that our proof uses very minimal background, such as some basic facts about Morse theory, vector bundles, homology and linear algebra, and should be at the level of university master students.

Hereafter manifolds, submanifolds and maps between them will be smooth, if not differently stated.

\begin{example*}
The linear vector fields
\begin{align*}
&u_1 = -x_2\frac{\partial}{\partial x_1} + x_1\frac{\partial}{\partial x_2} - x_4\frac{\partial}{\partial x_3} + x_3\frac{\partial}{\partial x_4},\\
&u_2 = -x_3\frac{\partial}{\partial x_1} + x_4\frac{\partial}{\partial x_2} + x_1\frac{\partial}{\partial x_3} - x_2\frac{\partial}{\partial x_4},\\
&u_3 = -x_4\frac{\partial}{\partial x_1} - x_3\frac{\partial}{\partial x_2} + x_2\frac{\partial}{\partial x_3} + x_1\frac{\partial}{\partial x_4}
\end{align*}
define an orthonormal parallelization of the unit sphere $S^3$. They are obtained by quaternion multiplication on the left by $i$, $j$ and $k$ respectively, with $x_1 + x_2 i +x_3 j + x_4 k \in S^3 \subset H \cong R^4$. Since $u_1$, $u_2$ and $u_3$ are invariant with respect to the antipodal map $-\id_{S^3}$, they pass to the quotient yielding a parallelization of $RP^3 = S^3/\{\pm \id_{S^3}\}$.
\end{example*}

\begin{proof}[Proof of the parallelizability theorem]
Let $M$ be a closed connected oriented 3-man\-i\-fold. We want to prove that there are three vector fields $(w_1,w_2,w_3)$ on $M$ which are linearly independent at every point.

The manifold $M$ admits a \textsl{Heegaard splitting}, namely a splitting of the form \[M = M'\cup M'',\] where $M'$ and $M''$ are 3-dimensional handlebodies contained in $M$ of the same genus $g\geq 0$, and \[F = \Bd M' = \Bd M'' = M'\cap M''\] is a closed connected orientable surface of genus $g$.

Clearly, the handlebodies $M'$ and $M''$ can be embedded in $R^3$ (in a standard way) and so each one of them is parallelizable, by restricting the canonical frame field of $R^3$. In this way, we get two parallelizations $V' = (v_1', v_2', v_3')$ and $V'' = (v_1'', v_2'', v_3'')$ on $M'$ and $M''$ respectively. If $V'$ and $V''$ agree along the Heegaard surface $F$, then they can be glued together to give a parallelization of $M$. However, in general this will not be the case. 

Let us fix a Riemannian metric on $M$. By Gram-Schmidt orthogonalization we can assume that $V'$ and $V''$ are orthonormal frame fields that agree with the given orientation of $M$.

Let $A\: F \to \SO(3)$ be the change of basis matrix map from the basis $V'_{|F}$ to $V''_{|F}$. If $A$ were null-homotopic, then $V'$ could be continuously changed into $V''$ in a tubular neighborhood $U \cong F \times [0,1]$ of $F$, thus yielding a parallelization of $M$. Observe that a map $A\: F \to \SO(3)$ defined on a surface $F$ is null-homotopic if and only if the induced homomorphism between fundamental groups is trivial, since this is exactly the condition for finding a lift to the universal covering $S^3 \to \SO(3)$, and every map $F \to S^3$ is clearly null-homotopic. So, we are left to show that the parallelizations $V'$ and $V''$ on $M'$ and $M''$, respectively, can be suitably chosen so that the corresponding change of basis matrix map $A$ is null-homotopic.

Notice that for every path-connected space $X$ and for every continuous base point-preserving map $f\: X \to \SO(3)$, the induced homomorphism $f_* \: \pi_1(X) \to \pi_1(\SO(3)) \cong \Z_2$ is trivial if and only if so is the induced linear map of $\Z_2$-vector spaces\break $f_* \: H_1(X;\Z_2) \to H_1(\SO(3); \Z_2) \cong \Z_2$, as it can be immediately realized by considering the Hurewicz homomorphism $\pi_1(X) \to H_1(X; \Z)$ and the coefficient homomorphism $H_1(X; \Z) \to H_1(X;\Z_2)$. Hereafter, by $f_*$ we mean the map induced in first homology with $\Z_2$ coefficients.

Next consider any two arbitrary orthonormal positive frame fields $W' = (w_1',w_2',w_3')$ on $M'$ and $W'' = (w_1'',w_2'',w_3'')$ on $M''$. Then we have change of basis matrix maps $C' \: M' \to \SO(3)$ from the basis $W'$ to $V'$ and $C'' \: M'' \to \SO(3)$ from $W''$ to $V''$, as well as the change of basis matrix map $B \: F \to \SO(3)$ from the basis $W'_{|F}$ to $W''_{|F}$. Therefore, we obtain \[B = (C''_{|F})^{-1}\cdot A \cdot C'_{|F}.\] An easy computation yields
\begin{equation}\label{B*/eqn}
B_* = (C''_{|F})_* + A_* + (C'_{|F})_* \: H_1(F;\Z_2) \to H_1(\SO(3); \Z_2).
\end{equation}
Here we are using the elementary fact that for every path-connected Lie group $(G, \cdot)$ ($\SO(3)$ in our case), for every path-connected topological space $X$ and for every continuous maps $f, g \: X \to G$ we have \[(f\cdot g)_* = f_* + g_* \: H_1(X; \Z_2) \to H_1(G; \Z_2)\]
where by $f \cdot g: X \to G$ we denote the map defined by $(f \cdot g )(x):=f(x) \cdot g(x)$ for every $x \in X$ (this can be easily shown by first proving the above equality at the fundamental group level, and then using the Hurewicz homomorphism and coefficient reduction modulo two).
So, we are left to show that there exist maps $C'$ and $C''$ as above such that $B_* = 0$. It is actually enough to construct the linear maps $C'_*$ and $C''_*$ in homology. Indeed, being $M'$ and $M''$ handlebodies, they deformation retract to a bouquet of $g$ circles $M'\cong M'' \simeq \vee_{\!g} S^1$, and so every linear function $\phi \: H_1(M';\Z_2) \to \Z_2$ is induced by a map $M'\to \SO(3)$ obtained by composing a homotopy equivalence $r\: M' \to \vee_{\!g} S^1$ with a map $\vee_{\!g} S^1 \to \SO(3)$ that suitably sends the $i$-th circle $S^1_i \subset \vee_{\!g} S^1$ to the identity matrix if $\phi(r_*^{-1}([S^1_i])) = 0$ or to $\SO(2)\subset \SO(3)$ homeomorphically if $\phi(r_*^{-1}([S^1_i])) = 1$, for every $i = 1,\dots, g$ (and similarly for $M''$). 

Next we observe that every $\alpha \in H_1(F;\Z_2)$ can be represented by a closed connected simple curve $a \subset F$. This is obvious if $\alpha = 0$ as it is the homology class of a circle in $F$ which is the boundary of a disk. On the other hand, if $\alpha\neq 0$ we can consider a canonical symplectic basis $\mu_1, \lambda_1, \dots, \mu_g, \lambda_g$ of $H_1(F;\Z_2)$, each element of which is so represented, and write $\alpha = \mu_{i_1} + \cdots +\mu_{i_r} + \lambda_{j_1} + \cdots +\lambda_{j_s}$ for some $i_1< \cdots<i_r$ and $j_1<\cdots < j_s$. Whenever $i_p = j_q$, the term $\mu_{i_p} + \lambda_{j_q}$ can be represented by a closed connected simple curve obtained by the usual crossing desingularization at a single transversal intersection point of the two simple curves representing $\mu_{i_p}$ and $\lambda_{j_q}$, and so $\alpha$ turns out to be the homology class of the disjoint union of finitely many closed simple curves. It is now enough to join them by suitable pairwise disjoint embedded bands.

Let $i'\: F \to M'$ and $i'': F \to M''$ be the inclusion maps. For every $\alpha \in H_1(F;\Z_2) \cong \Z_2^{2g}$, we set $\alpha' := i'_*(\alpha) \in H_1(M';\Z_2) \cong \Z_2^{g}$ and $\alpha'' := i''_*(\alpha) \in H_1(M'';\Z_2) \cong \Z_2^{g}$. Notice that $i'_*$ and $i''_*$ are surjective and so $\ker(i'_*)$ and $\ker(i''_*)$ are linear subspaces of $H_1(F;\Z_2)$ of dimension $g$.

We now prove that $A_*(\alpha) = 0$ for every $\alpha\in \ker(i'_*) \cap \ker(i''_*)$. Let $a\subset F$ be a closed connected simple curve representing $\alpha$. Then, there exist compact connected properly embedded surfaces $S'\subset M'$ and $S''\subset M''$ with $\Bd S'=\Bd S'' = a$, such that $S = S'\cup S''$ is a closed smooth surface in $M$.

In the following lemma we show that $S$ admits a parallelizable tubular neighborhood $U$ in $M$.
We then have $A_{|F\cap U}=(D''_{|F \cap U})^{-1} \cdot D'_{|F\cap U}$,  where $D'$ and $D''$ are the change of basis matrix maps from $V'_{|M'\cap U}$ and $V''_{|M''\cap U}$, respectively, to a fixed positive orthonormal parallelization of $U$. This will allow us to conclude, since $$A_*(\alpha)=(D''_{|F \cap U})_*(\alpha) + (D'_{|F \cap U})_*(\alpha)=D''_*(\alpha'') + D'_*(\alpha')=0,$$ since $\alpha' = 0$ in $H_1(M'\cap U;\Z_2)$ and $\alpha'' = 0$ in $H_1(M''\cap U;\Z_2)$.

Consider a basis $\alpha_1,\dots, \alpha_k$ for $\ker(i'_*) \cap \ker(i''_*)$, for some $k\geq 0$, and extend it to a basis of $\ker(i'_*)$ by the classes $\beta_1, \dots \beta_h$ and to a basis of $\ker(i''_*)$ by the classes $\gamma_1, \dots \gamma_h$, with $h = g-k$. Then, $\alpha_1,\dots, \alpha_k, \beta_1, \dots \beta_h, \gamma_1, \dots, \gamma_h$ are linearly independent as they form a basis of $\ker(i'_*) + \ker(i''_*)$, and so they can be extended to a basis of $H_1(F;\Z_2)$ by the classes $\delta_1,\dots, \delta_k$. It follows that $\gamma_1',\dots, \gamma_h', \delta_1',\dots, \delta_k'$ form a basis of $H_1(M';\Z_2)$, and $\beta_1'',\dots, \beta_h'', \delta_1'',\dots, \delta_k''$ form a basis of $H_1(M'';\Z_2)$.

We have already proved that $B_*(\alpha_j)=0$ for every $j=1,\dots,k$, and for every choice of the change of basis matrices $C'$ and $C''$. 

For every choice of the indices we set
\begin{align}
\begin{split}\label{C*/eqn}
& C'_*(\gamma'_j) := A_*(\gamma_j),\qquad C'_*(\delta'_j) := A_*(\delta_j),\\
& C''_*(\beta''_j) := A_*(\beta_j),\qquad C''_*(\delta''_j) := 0.
\end{split}
\end{align}
Then we get linear functions $C'_* \: H_1(M';\Z_2) \to \Z_2$ and $C''_*\: H_1(M'';\Z_2) \to \Z_2$, where the identification $H_1(\SO(3);\Z_2) \cong \Z_2$ is understood. Such linear functions are then induced by smooth maps $C'\: M' \to \SO(3)$ and $C''\: M'' \to \SO(3)$, respectively.

By equations \ref{B*/eqn} and \ref{C*/eqn} above we then obtain $B_*=0$. This, together with the following lemma, is enough to conclude the proof.
\end{proof}

\begin{lemma}
Let $M$ be an orientable 3-manifold and let $S \subset M$ be a smooth closed connected surface. Then $S$ has a parallelizable tubular neighborhood.
\end{lemma}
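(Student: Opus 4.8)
The plan is to reduce parallelizability of a tubular neighborhood $U$ of $S$ to the triviality of a single rank-$3$ bundle over $S$, and then to produce the required trivialization of that bundle by immersing $S$ into $\R^3$.

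\emph{Step 1: reduction to a bundle over $S$.} Let $U$ be a tubular neighborhood of $S$ in $M$. By the tubular neighborhood theorem $U$ is diffeomorphic to the total space of the normal line bundle $\nu$ of $S$ in $M$, it deformation retracts onto the zero section $S$, and we denote by $p\colon U \to S$ the bundle projection. Restricting $TU$ to $S$ gives $TU_{|S} \cong TS \oplus \nu$, and since the retraction is homotopic to $\id_U$ we get $TU \cong p^*(TS \oplus \nu)$. Hence $U$ is parallelizable if and only if the rank-$3$ bundle $TS \oplus \nu$ over $S$ is trivial.

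\emph{Step 2: identifying $\nu$ via an immersion.} Since $M$ is orientable, $w_1(TM_{|S}) = 0$, so the Whitney sum formula applied to $TM_{|S} = TS \oplus \nu$ gives $w_1(\nu) = w_1(TS)$. Now I pick any immersion $f\colon S \to \R^3$. Its normal line bundle $\nu_f$ satisfies $TS \oplus \nu_f \cong f^*T\R^3$, which is trivial, and again by the Whitney sum formula $w_1(\nu_f) = w_1(TS) = w_1(\nu)$. Because real line bundles over a CW complex are classified by their first Stiefel--Whitney class, $\nu_f \cong \nu$, hence $TS \oplus \nu \cong TS \oplus \nu_f$ is trivial; together with Step 1 this proves the lemma.

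\emph{Main obstacle.} The orientable case is essentially free: then $S$ embeds in $\R^3$, the bundle $\nu$ is automatically trivial, and $TS \oplus \nu \cong TS \oplus \underline{\R}$ is trivial because $S$ has trivial normal bundle in $\R^3$. The real content lies in the non-orientable case, where $S$ is one-sided in $M$ and $\nu$ is a nontrivial line bundle; here Step 2 depends on the classical fact that every closed surface immerses in $\R^3$ (e.g.\ by Whitney's immersion theorem, or explicitly from the immersions of $\RP^2$ and the Klein bottle together with connected sum). An alternative argument avoiding immersions is obstruction theory: $TS \oplus \nu$ has $w_1 = w_1(TS) + w_1(\nu) = 0$, so it is trivial over the $1$-skeleton of $S$, and its single remaining obstruction in $H^2(S;\Z_2) \cong \Z_2$ equals $w_2(TS \oplus \nu) = w_2(TS) + w_1(TS)\,w_1(\nu) = w_2(TS) + w_1(TS)^2$, which vanishes by Wu's formula $w_2(TS) = w_1(TS)^2$ for closed surfaces.
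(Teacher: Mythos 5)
Your proof is correct, but it takes a genuinely different route from the paper. The paper never considers the bundle $TS \oplus \nu$ directly: it argues that the normal bundle $\nu$ is determined by $S$ alone (via an explicit handle decomposition of $S$ and a case-by-case analysis of the possible gluings over the cocores of the $1$-handles), and then embeds $S$ in an explicitly parallelizable $3$-manifold --- $\R^3$ in the orientable case, $\cs_n \RP^3$ in the non-orientable case, where parallelizability of $\RP^3$ comes from the quaternionic frame on $S^3$ and is inherited by connected sums --- so that the tubular neighborhood of $S$ in $M$ is diffeomorphic to one sitting inside a parallelizable manifold. Your Step 1 (reducing to triviality of $TS\oplus\nu$ via $TU \cong p^*(TS\oplus\nu)$) is a cleaner formulation of the same reduction, and your Step 2 replaces the paper's hand-built model neighborhood with either (a) the comparison $\nu \cong \nu_f$ for an immersion $f\colon S \to \R^3$, using that line bundles are classified by $w_1$ and that $w_1(\nu) = w_1(TS) = w_1(\nu_f)$, or (b) obstruction theory plus Wu's formula. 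Both of your variants are shorter and more conceptual, but they lean on more machinery (classification of line bundles by $w_1$, existence of immersions of closed surfaces in $\R^3$, or the identification of the obstruction with $w_2$ and Wu's relation $w_2 = w_1^2$), which runs somewhat against the paper's stated aim of using only very minimal background; the paper's argument is longer but entirely bare-handed. Note also that your argument via $w_2$ essentially anticipates the paper's closing Remark, which records exactly the fact that an oriented rank-$3$ bundle over a low-dimensional complex is trivial iff $w_2$ vanishes.
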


\begin{proof}
Since every open tubular neighborhood of $S$ in $M$ is diffeomorphic to the total space of the normal bundle $\nu_S$ of $S$ in $M$, it will be enough to prove that $\nu_S$ is determined, up to bundle isomorphisms, only by $S$, namely it is independent of the embedding of $S$ in $M$, and that every closed surface can be embedded in a certain parallelizable 3-manifold $N$. This implies that a tubular neighborhood of $S$ in $M$ is diffeomorphic to a tubular neighborhood of $S$ in $N$, which is parallelizable.

If $S$ is orientable, then  $\nu_S$ is trivial (hence it is independent of the embedding) because, by means of an orientation of $S$ and of $M$, a unit normal vector field can be constructed along $S$. Moreover, $S$ embeds in $R^3$, which is parallelizable.

If $S$ is non-orientable, then it is a connected sum of $n$ copies of $RP^2$, that is\break $S \cong \cs_n RP^2$ for some $n \geq 1$, and so it can be embedded in $N = \cs_n RP^3$, which is a parallelizable $3$-manifold. Indeed, $RP^3$ is parallelizable as it is shown in the example above. Moreover, if $M_1$ and $M_2$ are oriented parallelizable 3-manifolds, then also their connected sum $M_1 \cs M_2$ is parallelizable, because any two positive parallelizations of $M_1$ and $M_2$ can be homotoped to coincide in a tubular neighborhood of the 2-sphere along which the connected sum is made.

We are left to prove that $\nu_S$ depends only on $S$ (namely, on the number $n$ of the $RP^2$ connected summands in our situation). The total space of $\nu_S$, being diffeomorphic to a tubular neighborhood of $S$ in $M$, is orientable. Hence, in order to conclude, it is enough to show that there exists a unique real line bundle $\xi\: E \to S$  (up to bundle isomorphisms) whose total space $E$ is orientable.

Fix a Riemannian metric on the bundle $\xi$ and consider the following commutative diagram:
\[\begin{tikzcd}
	\widehat E & E \\
	{\widehat S} & S
	\arrow["2\,\mathbin{:}\,1", "p"', from=2-1, to=2-2]
	\arrow["\xi", from=1-2, to=2-2]
	\arrow["{\widehat\xi}"', from=1-1, to=2-1]
	\arrow["2\,\mathbin{:}\,1"', "{\widehat p}", from=1-1, to=1-2]
\end{tikzcd}\]
where $\widehat S$ is the closed connected orientable surface of genus $n-1$, $p\: \widehat S \to S$ is the orientable double covering of $S$, $\widehat\xi = p^*(\xi) \: \widehat E \to \widehat S$ is the pullback bundle of $\xi$ via $p$ and $\widehat p\: \widehat E \to E$ is the induced fiber-preserving double covering. Since $E$ is orientable, so is $\widehat E$ and by the orientable case above we can identify $\widehat E$ with $\widehat S \times \R$ by means of a suitable bundle isomorphism, which can be assumed to be unitary with respect to the Riemannian metric induced by $\widehat p$ on $\widehat\xi$.

The non-trivial covering transformation of $p$ is an orientation-reversing free involution $\iota: \widehat S \to \widehat S$. On the other hand, the free involution $\hat \iota$ of $\widehat E \cong \widehat S\times \R$ upstairs, determined by $\widehat p$, is orientation-preserving, and moreover $\widehat\xi\circ \hat\iota = \iota\circ \widehat\xi$. Then $\hat\iota\: \widehat S \times \R \to \widehat S \times \R$ is forced to be the map 
\[\hat\iota(x,t) = (\iota(x),-t)\]
and hence $E \cong \widehat E/\langle\hat\iota\rangle$ is uniquely determined up to bundle isomorphisms.
\end{proof}

\begin {remark}
Over a circle there are only two isomorphism classes of line bundles, namely the trivial one and the non-orientable one. So, for the non-orientable case in the previous lemma, one could also argue as follows. A line bundle $\xi$ over a surface $S$ depends only on its restriction over the 1-skeleton (a bouquet of circles), up to bundle isomorphisms. Assume that the total space is orientable. Therefore, over every circle in the 1-skeleton, the restriction of $\xi$ must be non-trivial (hence non-orientable) exactly when that circle has non-trivial normal bundle in $S$. This determines uniquely the bundle.
\end{remark}

\begin{remark}
More generally, one can consider a smooth oriented rank-3 real vector bundle $\xi\: E \to M$ (endowed with a Riemannian metric) and, by taking orthonormal positive trivializations over $M'$ and $M''$, there is a change of basis matrix map\break $A \: F \to \SO(3)$.
The Mayer-Vietoris homology exact sequence applied to the Heegaard splitting $M = M' \cup M''$ tells us that the boundary map is an isomorphism between $H_2(M;\Z_2)$ and $\ker(i'_*) \cap \ker(i''_*)$ and, moreover, the restriction $A_{*|} \: \ker(i'_*) \cap \ker(i''_*) \to \Z_2$ can be identified with the second Stiefel-Whitney class $w_2(\xi) \in H^2(M;\Z_2) = H_2(M;\Z_2)^*$. Then, the argument used in the proof above yields a Heegaard splitting based interpretation of the well-known obstruction-theoretic fact that an orientable rank-3 vector bundle $\xi$ over a closed orientable 3-manifold $M$ is trivial if and only if $w_2(\xi)=0$.
\end{remark}

\section*{Acknowledgements}
We would like to thank the anonymous referee for his or her suggestions that have been useful to improve the manuscript.

The second author is member of GNSAGA -- Istituto Nazionale di Alta Matematica `Francesco Severi', Italy.


\thebibliography{0}
\bibitem{BL2018}
R. Benedetti and P. Lisca, 
\textsl{Framing 3-manifolds with bare hands},
Enseign. Math. \textbf{64} (2018), no. 3/4, 395--413. 

\bibitem{DGGK2020}
S. Durst, H. Geiges, J. Gonzalo Pérez, M. Kegel,
\textsl{Parallelisability of 3-manifolds via surgery}, 
Expo. Math. \textbf{38} (2020), no. 1, 131--137.

\bibitem{FM1997}
A.\,T. Fomenko and S.\,V. Matveev,
\textsl{Algorithmic and computer methods for three-manifolds},  Mathematics and its Applications \textsl{425}, Kluwer Academic Publishers, 1997.

\bibitem{Ge2008}
H. Geiges,
\textsl{An introduction to contact topology}, 
Cambridge Studies in Advanced Mathematics \textbf{109}, Cambridge University Press, 2008.

\bibitem{Go1987}
J. Gonzalo, 
\textsl{Branched covers and contact structures}, 
Proc. Amer. Math. Soc. \textbf{101} (1987), no. 2, 347--352.

\bibitem{K1989}
R.\,C. Kirby,
\textsl{The topology of 4-manifolds},
Lecture Notes in Mathematics 1374, Springer-Verlag, 1989. 

\bibitem{St1935}
E. Stiefel, 
\textsl{Richtungsfelder und Fernparallelismus in $n$-dimensionalen Mannigfaltigkeiten},
Comment. Math. Helv. \textbf{8} (1935), no. 1, 305--353. 

\bibitem{Wh1961}
J.\,H.\,C. Whitehead, 
\textsl{The immersion of an open 3-manifold in Euclidean 3-space},
Proc. London Math. Soc. \textbf{11} (1961), 81--90. 
\endthebibliography
\end{document}